\documentclass[12pt,a4paper]{amsart}
\usepackage[dvipsnames]{xcolor}
\usepackage{amsfonts}
\usepackage{amscd}
\usepackage[latin1]{inputenc}
\usepackage{t1enc}
\usepackage[mathscr]{eucal}
\usepackage{indentfirst}
\usepackage{graphicx}
\usepackage{graphics}
\usepackage{pict2e}
\usepackage{epic}
\numberwithin{equation}{section}
\usepackage[margin=2.6cm]{geometry}
\usepackage{epstopdf} 
\usepackage{amsmath}
\allowdisplaybreaks
\usepackage{amssymb}
\usepackage{amsthm}
\usepackage{bbm}
\usepackage{tikz}
\usepackage{paralist}
\usepackage[onehalfspacing]{setspace}
\usepackage{amsfonts}
\usepackage{color}
\usepackage{nicefrac}
\usepackage{mathrsfs}
\usepackage{multirow}
\usepackage{mathtools}
\usepackage{tikz-cd}

\newcounter{dummy}
\usepackage{hyperref}
\usepackage{enumitem}
\makeatletter
\newcommand\myitem[1][]{\item[#1]\refstepcounter{dummy}\def\@currentlabel{#1}}
\makeatother
\newtheorem{thm}{Theorem}
\numberwithin{thm}{section}
\newtheorem{lemma}[thm]{Lemma}

\newtheorem{definition}[thm]{Definition}
\newtheorem{coro}[thm]{Corollary}

\newtheorem*{thm*}{Theorem}
\newtheorem*{prop*}{Proposition}
\numberwithin{equation}{section}
\theoremstyle{remark}
\newtheorem{remark}[thm]{Remark}
\newtheorem{ex}[thm]{Example}

\newcommand{\A}{\mathscr{A}}
\newcommand{\B}{\mathscr{B}}
\newcommand{\E}{\mathcal{E}}

\newcommand{\R}{\mathbb{R}}
\newcommand{\N}{\mathbb{N}}
\newcommand{\Z}{\mathbb{Z}}

\newcommand{\C}{\mathbb{C}}

\newcommand{\T}{\mathbb{T}}

\newcommand{\D}{\mathscr{D}}

\newcommand{\test}{\mathcal{T}_{\mathcal{A}}}

\newcommand{\dx}{\,\textup{d}x}

\everymath{\displaystyle}

\DeclareMathOperator{\Lin}{Lin}

\DeclareMathOperator{\curl}{curl}

\DeclareMathOperator{\dist}{dist}
\DeclareMathOperator{\spann}{span}
\DeclareMathOperator{\divergence}{div}

\renewcommand{\phi}{\varphi}








\newcommand{\M}{\mathscr{M}}

\definecolor{Gump}{rgb}{0,0.6,0.4}
\definecolor{Hanks}{rgb}{0.7,0.3,0.1}

\begin{document}
\title{$\mathscr{A}$-free truncation and higher integrability of minimisers}
\author[Schiffer]{Stefan Schiffer}
\address{Max-Planck Institute for Mathematics in the Sciences} 
\email{stefan.schiffer@mis.mpg.de}
\subjclass[2020]{35J50,49J45}
\keywords{$\mathscr{A}$-quasiconvexity, regularity of minima, Lipschitz truncation} 
\begin{abstract}
    We show higher integrability of minimisers of functionals
    \[
    I(u) = \int_{\Omega} f(x,u(x)) ~\textup{d}x
    \]
    subject to a differential constraint $\mathscr{A} u=0$ under natural $p$-growth and $p$-coercivity conditions for $f$ and regularity assumptions on $\Omega$. For the differential operator $\mathscr{A}$ we asssume a rather abstract truncation property that, for instance, holds for operators $\mathscr{A}=\mathrm{curl}$ and $\A=\mathrm{div}$. The proofs are based on the comparison of the minimiser to the truncated version of the minimiser.
\end{abstract}
\maketitle
\section{Introduction}
Functionals of the form 
\begin{equation}\label{intro:functional:grad}
    J(v) = \int_{\Omega} f(x,v(x),\nabla v(x)) \dx
\end{equation}
have been intensively studied for a long time. \emph{Existence} of minimisers often is obtained through growth condition from above and below (coercivity) and assuming \emph{quasiconvexity} (cf. \cite{Morrey}) of the integrand, guaranteeing weak lower-semiconinuity. Minimisers then are part of the function space $W^{1,p}$ when assuming $p$-growth and $p$-coercivity (cf. \eqref{intro:growth}); the question of regularity theory is whether we can show anything more.

As a very basic example, if we take the integrand $f(x,v,A) = \vert A \vert^2$ minimisers are harmonic functions and therefore have very strong regularity properties. For general $f$ the situation is, of course, more complicated. We would like to mention two possible results: First, higher \emph{differentiability} aims to show that minimisers exhibit better differentiability properties than just $W^{1,p}$ such as $W^{1+\varepsilon,p}$ or $Du \in C^{0,\alpha}_{\mathrm{loc}}(\Omega \setminus \Omega^s)$ for some singular set $\Omega^s$ of low dimension. Higher \emph{integrability} focuses on showing that $u \in W^{1,r}$ for some $r>p$. Whether such higher regularity result can be achieved obviously depends on the assumptions on the integrand $f$.

There has been very detailed study of various situations in which \eqref{intro:functional:grad} appears and it is impossible to give an exhaustive, but short overview over all obtained results. We therefore refer the reader to the works \cite{AS,Evans,GG,Giusti,Mingione, Mingione2} and the references therein for a more detailed overview of the achievements in the gradient case.

\medskip

Naturally, less well-studied is the twin sibling of functional \eqref{intro:functional:grad}, which reads
\begin{equation} \label{intro:functional}
    I(u) := \begin{cases}
        \int_{\T_n} f(x,u(x)) \dx & \text{if } \A u =0, \\
        \infty & \text{else.}
    \end{cases}
\end{equation}
Here, $u \colon \T_n \to \R^m$ is a function on the $n$-torus, $f$ is a measurable  integrand that satisfies some $p$-growth condition from above and below (cf. \eqref{intro:growth} below) and $\A$ is a constant coefficient differential operator 
\[
\A u = \sum_{\vert \alpha \vert =k} A_{\alpha} \partial^{\alpha} u.
\]
To avoid a detailed account of boundary values, in this introduction and for the majority of the paper, we restrict the treatment of the functional to the $n$-torus $\T_n$ instead of dealing with a general open subset $\Omega \subset \R^n$. We refer to Corollary \ref{coro:domain} for a result on domains that makes additionally use of \emph{extension} theorems on domains.

With $f(x,v,A) =f(x,A)$, functional \eqref{intro:functional:grad} is a special case of \eqref{intro:functional} whenever $\A=\curl$, as $\curl u =0$ if and only if $u = \nabla v$ for zero-average functions $u $ on the $n$-torus. Therefore, it is natural to ask the same questions as for the gradient case, i.e. higher differentiability and higher integrability.
The question of (partial) regularity, i.e. $C^{0,\alpha}_{\mathrm{loc}}$ regularity of minimisers on $\Omega \setminus \Omega^s$ up to a singular set $\Omega^s$ of zero Lebesgue measure has, for instance, been studied in \cite{CG,Gmeineder,RL}. In contrast, in this work we are focused on obtaining higher integrability of minimisers to the functional $I$.
\medskip

In contrast to other contributions, our assumptions on the integral $f$ will be rather modest. To handle the functional \eqref{intro:functional}, we only assume that $f$ satisfies the growth condition
\begin{equation} \label{intro:growth}
 \vert w \vert^p \leq f(x,w) \leq \nu \vert w \vert^p +c.
 \end{equation}
In particular, we highlight that in contrast to many other contribution we do \emph{not need}
\begin{enumerate} [label=(\roman*)]
    \item the assumption that $f$ is already $\A$-quasiconvex or convex. In the simplest setting (cf. Theorem \ref{thm:1}) we even do \emph{not} need any continuity assumptions on $f(x,\cdot)$;
    \item any continuity assumptions on $f(\cdot,w)$.
\end{enumerate}
Our strategy of proof of course bears similarities to other results in this direction, but, according to our lax assumptions, some new ideas are needed. In particular, the proof is not based on a version of Caccioppoli's inequality (cf. \cite{Evans}). First, such an inequality is already harder to obtain in an $\A$-free setting (also cf. \cite{RL}) but, second and more importantly, due to our non-restrictive assumptions on $f$ it is hard to imagine that a Caccioppoli-based approach works.

Instead, the key idea that we are going to employ is the method of $\A$-free truncation: We cut-off the concentrations of a minimiser $v$ to obtain an $\A$-free $L^{\infty}$ function $\tilde{v}$. As $v$ minimises the functional $I$, we know that $I(v) \leq I(\tilde{v})$. On the other hand, the fact that $\tilde{v} \in L^{\infty}$ gives good upper bounds on $I(\tilde{v})-I(v)$. Combining the bounds of both directions gives an integrability estimate.

We remark that a similar methods have been used for instance in \cite{KL,Lewis} to obtain higher integrability to solutions of elliptic and parabolic differential equations.
\medskip

 We start by giving a definition of the truncation property \eqref{trunc} that plays a crucial role throughout the paper; note that $\M$ denotes the Hardy-Littlewood maximal function.

\begin{definition}
   Let $1 \leq p < \infty$. We say that $\A$ satisfies the property \eqref{trunc} if the following holds: There is a constant $C(\A)$, such that for any $u \in L^p(\T_n;\R^m)$ with $\A u =0$ and any $\lambda >0$ there exists an $\tilde{u} \in L^{\infty}(\T_n;\R^m)$ such
   that $\A \tilde{u}=0$ and
   \begin{equation}
   \label{trunc} \tag{TP}
   \begin{cases}
       \Vert \tilde{u} \Vert_{L^{\infty}} \leq C(\A) \lambda, \\
       \{ u \neq \tilde{u}\} \subset \{ \M u \geq \lambda \}.
   \end{cases}
   \end{equation}
   \end{definition}

While the family of operators, where such a truncation operator exists, is not fully classified, we can still prove \eqref{trunc} for many physical operators  So far, such a property has been shown for $\A= \curl$ (again through the identification $\curl u= 0 \Leftrightarrow u = \nabla v$) \cite{AF,Liu,Zhang}, $\A = \mathrm{d}$ (the exterior derivative of a differential form) \cite{Schiffer}, $\A = \divergence$ when acting on \emph{symmetric} $3 \times 3$ matrices \cite{BGS} and implicitly for some other operators as well \cite{BDF,BDS,Schiffer2}.
The first main result then reads as follows:
\begin{thm} \label{thm:1}
    Let $1<p<\infty$. Suppose that $f$ obeys the growth property \eqref{intro:growth} and that $\A$ satisfies \eqref{trunc}. Let $u \in L^p(\T_n;\R^m)$ be a minimiser to $I$. Then there is an $\varepsilon_0 = \varepsilon_0(p,C(\A),\nu)$ such that for all $\varepsilon < \varepsilon_0$ we have $u \in L^{p+\varepsilon}(\T_n;\R^m)$.
\end{thm}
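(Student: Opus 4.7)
The plan is a Stampacchia/Gehring-style self-improvement argument in which the only competitor I ever plug into the minimality inequality is the $\A$-free truncation.

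First I would fix $\lambda>0$ and let $\tilde u = \tilde u_\lambda$ be the function from \eqref{trunc}. Since $\A\tilde u = 0$, $\tilde u$ is an admissible competitor in \eqref{intro:functional}, so $I(u)\le I(\tilde u)$. The two integrands agree on $\{u=\tilde u\}$, and subtracting that common contribution leaves
\[
\int_{\{u\neq\tilde u\}} f(x,u(x))\dx \;\le\; \int_{\{u\neq\tilde u\}} f(x,\tilde u(x))\dx.
\]
The lower growth bound in \eqref{intro:growth} estimates the left side from below by $\int_{\{u\neq\tilde u\}}|u|^p\dx$, while the upper bound combined with $\|\tilde u\|_{L^\infty}\le C(\A)\lambda$ estimates the right side from above by $(\nu C(\A)^p\lambda^p+c)\,|\{u\neq\tilde u\}|$. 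Using $\{u\neq\tilde u\}\subset\{\M u\ge\lambda\}$ from \eqref{trunc} and the trivial inclusion $\{|u|>C(\A)\lambda\}\subset\{u\neq\tilde u\}$ (since $|\tilde u|\le C(\A)\lambda$ pointwise), I arrive at the key distribution-function inequality
\begin{equation}\label{plan:key}
\int_{\{|u|>C(\A)\lambda\}} |u|^p \dx \;\le\; \bigl(\nu C(\A)^p\lambda^p + c\bigr)\,|\{\M u\ge\lambda\}|.
\end{equation}

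Next I would multiply \eqref{plan:key} by $\varepsilon\lambda^{\varepsilon-1}$ and integrate over $\lambda\in(0,\infty)$. By Fubini, the left-hand side equals $C(\A)^{-\varepsilon}\int_{\T_n}|u|^{p+\varepsilon}\dx$. For the right-hand side, the $\lambda^p$-term becomes, after layer-cake, a constant multiple of $\frac{\varepsilon}{p+\varepsilon}\|\M u\|_{L^{p+\varepsilon}}^{p+\varepsilon}$, and the $c$-term is controlled by $\int_0^\infty \varepsilon\lambda^{\varepsilon-1}|\{\M u\ge\lambda\}|\,d\lambda$, which is finite (for $\varepsilon<p$) by the weak-type $(p,p)$ bound on $\M$ combined with $|\{\M u\ge\lambda\}|\le|\T_n|$ near zero, and thus is bounded by some $C'(\|u\|_{L^p},|\T_n|)$. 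The Hardy-Littlewood maximal inequality (valid since $p+\varepsilon>1$) gives $\|\M u\|_{L^{p+\varepsilon}}\le A_{p+\varepsilon}\|u\|_{L^{p+\varepsilon}}$, yielding
\[
\int_{\T_n}|u|^{p+\varepsilon}\dx \;\le\; \varepsilon\,C(p,C(\A),\nu)\,A_{p+\varepsilon}^{p+\varepsilon}\int_{\T_n}|u|^{p+\varepsilon}\dx \;+\; C'.
\]
Choosing $\varepsilon_0=\varepsilon_0(p,C(\A),\nu)$ so small that the prefactor is at most $\tfrac12$ for every $\varepsilon\in(0,\varepsilon_0)$, I can absorb the first term into the left and conclude $\|u\|_{L^{p+\varepsilon}}^{p+\varepsilon}\le 2C'$.

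The main obstacle is that this absorption is only formal: I do not yet know that $\int_{\T_n}|u|^{p+\varepsilon}\dx$ is finite, so subtracting it from both sides is illegitimate. The standard cure, which I would use here, is to truncate the $\lambda$-integration at an upper cut-off $N$ --- equivalently, to replace $|u|^\varepsilon$ by the bounded weight $(|u|\wedge N)^\varepsilon$ --- so that the left-hand side is a priori finite for each $N$. The same estimates go through with constants independent of $N$ (since \eqref{plan:key} itself has no $N$), the absorption is now rigorous, and a final application of monotone convergence as $N\to\infty$ delivers the conclusion $u\in L^{p+\varepsilon}(\T_n;\R^m)$.
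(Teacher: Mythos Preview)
Your derivation of the level-set inequality from minimality plus \eqref{trunc} coincides with the paper's first step (Lemma~\ref{lemma:kasparov}), but the two arguments diverge afterwards. The paper immediately invokes Lemma~\ref{LZL} to replace $\mathcal{L}^n(\{\M u\ge\lambda\})$ by $c(n)\lambda^{-1}\int_{\{|u|\ge\lambda/2\}}|u|\dx$, obtaining an inequality in which only $|u|$ (and not $\M u$) appears on both sides, and then feeds this into a discrete hole-filling iteration (Lemma~\ref{lemma:Widman}). Your route---multiply by $\varepsilon\lambda^{\varepsilon-1}$, integrate in $\lambda$, and invoke the strong $L^{p+\varepsilon}$ bound for $\M$---is the classical Gehring computation and is also a valid path; it is more direct in appearance but ultimately hides the same weak-$(1,1)$ input inside the truncation step.

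That truncation step, however, is where you gloss over a genuine difficulty. After cutting the $\lambda$-integral at $N$, the dominant right-hand term becomes $\tfrac{\varepsilon}{p+\varepsilon}\,\nu\,C(\A)^p\int(\M u\wedge N)^{p+\varepsilon}\dx$, and for the absorption you need this bounded by a fixed multiple of the truncated left-hand side $\int|u|^p\bigl(|u|\wedge C(\A)N\bigr)^\varepsilon\dx$. The naive substitute for the maximal inequality, namely $\int(\M u\wedge N)^{p+\varepsilon}\le A_{p+\varepsilon}^{p+\varepsilon}\int(|u|\wedge N)^{p+\varepsilon}$, is \emph{false} (take $|u|=N^2$ on a set of measure $N^{-1}$: then $\M u\wedge N\equiv N$ on a set of measure $\sim 1$, while $|u|\wedge N$ has mass $\sim 1$). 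The fix is to decompose $u=u\,\indicator_{\{|u|\le N\}}+u\,\indicator_{\{|u|>N\}}$: the strong $(p+\varepsilon)$-bound handles the first piece, while the weak-$(1,1)$ estimate gives
\[
\int\bigl(\M(u\,\indicator_{\{|u|>N\}})\wedge N\bigr)^{p+\varepsilon}\dx\;\lesssim\; N^{p+\varepsilon-1}\!\int_{\{|u|>N\}}\!|u|\dx\;\le\;\int_{\{|u|>N\}}\!|u|^p(|u|\wedge N)^\varepsilon\dx.
\]
With this in place your absorption does go through with $N$-independent constants, and monotone convergence concludes the proof as you indicate. Note that this extra ingredient is precisely the content of Lemma~\ref{LZL}, so in the end both arguments rest on the same two facts about $\M$.
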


As mentioned, the results presented in this note are quite conditional in the sense that we \emph{must} have the truncation property to achieve the result. The proofs of those truncation statements are, however, quite involved and we skip an in-detail description on \emph{how} property \eqref{trunc} is achieved.

The reason why the result of Theorem \ref{thm:1} is stated on the torus, is that the truncation statement \eqref{trunc} is usually shown either on the torus or on the full space and \emph{not} on domains, as we have to additionally deal with boundary behaviour there. Nevertheless, we also show (cf. Corollary \ref{coro:domain}), that the statement of Theorem \ref{thm:1} may be generalised to certain domains. Again, this statement is fairly conditional; here we assume a further \emph{extension} property for the differential operator $\A$. In particular, we can show that a truncation property like \eqref{trunc} may hold on a domain $\Omega$ instead of $\T_n$ as follows: Supposing $\Omega \subset \T_n$ we extend the function $u$ to to $\T_n$, truncate on the torus and restrict back to $\Omega$.

Again, proofs of those extension property are rather elaborate and we refer to \cite{GS} and \cite{BS,GmR} for more details.
\medskip

Another growth condition of interest (that for example appears in the study of quasiconformal maps (cf. \cite{Astala,Faraco,Gehring,IS,LN})) is the following. Let $\Pi$ be a homogeneous $\A$-quasiaffine map of $p$-growth that obeys
\[
\vert \Pi(w) \vert \leq \vert v \vert^p
\]
and let $\alpha>1$ such that
\begin{equation} \label{intro:growth:2}
    \vert w \vert^p - \alpha \Pi(w) \leq f(x,w) \leq \nu \vert w \vert^p +c.
\end{equation}
Examples of such $\Pi$ are for instance minors if $\A= \curl$ acting on $u \colon \T_n \to \R^{n \times l}$, in particular $\Pi=\mathrm{det}$ for $n \times n$ matrices.

Suppose further that $f$ satisfies the continuity condition
\begin{equation} \label{intro:conti}
    \vert f(x,w) - f(x,w') \vert \leq L( 1 + \vert w \vert^{p-1} + \vert w' \vert^{p-1}) \vert w -w' \vert.
\end{equation}
We remark that this condition is natural to assume if $f$ is $\A$-quasiconvex \cite{FM,GR,MP}.
Consider the functional
\begin{equation} \label{intro:functional:2}
    I'(u) = \begin{cases}
        \int_{\T_n} f(x,u(x)) \dx & \text{if } \A u =0, \text{ and } \int_{\T_n} u=u_0, \\
        \infty & \text{else}
    \end{cases}
\end{equation}
where $u_0 \in \R$ is some predefined average.
\begin{thm} \label{thm:3}
    Let $1 <p<\infty$. Suppose that $f$ obeys the growth property \eqref{intro:growth:2} and \eqref{intro:conti} and that $\A$ satisfies \eqref{trunc}. Let $u \in L^p(\T_n;\R^d)$ be a minimiser to $I'$. Then there is an $\varepsilon_0 = \varepsilon_0(L,p,C(\A),\nu+\alpha)$ such that for all $0<\varepsilon< \varepsilon_0$ we have $u \in L^{p+\varepsilon}(\T_n;\R^m)$.
\end{thm}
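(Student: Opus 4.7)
The plan is to adapt the truncation--comparison scheme of Theorem~\ref{thm:1}, adding two ingredients specific to the present setting: a constant correction of the truncated field to enforce the mean-value constraint $\int u = u_0$, and an exact cancellation of the $\Pi$-term coming from $\A$-quasiaffinity. Given $\lambda>0$, I would let $\tilde u\in L^\infty$ be the $\A$-free truncation of $u$ furnished by \eqref{trunc}, and then set
\[
\hat u := \tilde u + c_\lambda, \qquad c_\lambda := u_0 - |\T_n|^{-1}\int_{\T_n}\tilde u\,dx.
\]
Since constants lie in the kernel of every constant-coefficient operator $\A$ of positive order, $\hat u$ is again $\A$-free; it has mean $u_0$ by construction, hence is admissible for $I'$. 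The correction satisfies $|c_\lambda|\leq |\T_n|^{-1}\int_{\T_n}|u-\tilde u|\,dx$, which is small compared to $\lambda$ for large $\lambda$, and $\|\hat u\|_{L^\infty}\leq (C(\A)+1)\lambda$.

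Minimality gives $\int_{\T_n}(f(x,u)-f(x,\hat u))\,dx\leq 0$. Unlike in Theorem~\ref{thm:1}, the integrands no longer coincide on $\{u=\tilde u\}$ once $c_\lambda\neq 0$, so I would split this integral according to $\{u=\tilde u\}$ and its complement. On $\{u=\tilde u\}$ the difference equals $f(x,u)-f(x,u+c_\lambda)$, which the continuity assumption \eqref{intro:conti} controls by $L|c_\lambda|(1+|u|^{p-1}+|c_\lambda|^{p-1})$, producing a remainder of order $L|c_\lambda|(1+\|u\|_{L^p}^{p-1})$ which is small in $\lambda$. On $\{u\ne \tilde u\}\subset \{\M u\geq \lambda\}$, I would insert the lower bound $f(x,u)\geq |u|^p-\alpha\Pi(u)$ and the upper bound $f(x,\hat u)\leq \nu|\hat u|^p+c\lesssim \lambda^p+c$.

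The crux is the absorption of $\alpha\int \Pi(u)$. Here I would exploit that $\Pi$, being $\A$-quasiaffine of $p$-growth, satisfies the Jensen equality $\int_{\T_n}\Pi(v)\,dx=\Pi(u_0)|\T_n|$ for every $\A$-free field $v$ with mean $u_0$; in particular, both $u$ and $\hat u$ produce the same value. Splitting this identity along $\{u=\tilde u\}$ and its complement, and controlling the contribution on $\{u=\tilde u\}$ by the elementary bound $|\Pi(u)-\Pi(u+c_\lambda)|\leq C(1+|u|^{p-1})|c_\lambda|$, yields $\int_{\{u\ne\tilde u\}}\Pi(u)\,dx = \int_{\{u\ne\tilde u\}}\Pi(\hat u)\,dx+O(|c_\lambda|)$. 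Combining this with $|\Pi(\hat u)|\leq|\hat u|^p\lesssim \lambda^p$ produces a good-$\lambda$ inequality
\[
\int_{\{u\ne\tilde u\}}|u|^p\,dx\leq C(\A,\nu+\alpha)\,\lambda^p\,\bigl|\{\M u\geq \lambda\}\bigr| + \text{(small correction)},
\]
precisely of the type that drives Theorem~\ref{thm:1}. A standard layer-cake/Fubini argument together with the $L^p$-boundedness of $\M$ then upgrades this to $u\in L^{p+\varepsilon}$ for every $\varepsilon<\varepsilon_0(L,p,C(\A),\nu+\alpha)$.

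The main obstacle is the bookkeeping introduced by the fact that \eqref{trunc} does not preserve the mean: the shift $c_\lambda$ destroys the clean splitting at $\{u=\tilde u\}$ and forces one to use \eqref{intro:conti} to absorb the resulting $O(|c_\lambda|)$ remainders rather than propagate them through the iteration. A secondary point is the appeal to the Jensen identity for $\A$-quasiaffine integrands of $p$-growth, which is the mechanism that makes the $\alpha\Pi$-contribution invisible on the level of admissible competitors and therefore compatible with the comparison scheme used in Theorem~\ref{thm:1}.
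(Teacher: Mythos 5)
Your proposal follows essentially the same route as the paper: truncate via \eqref{trunc}, add the constant $c_\lambda$ to restore the mean $u_0$, compare with the minimiser using \eqref{intro:conti} to control the shift and the quasiaffinity (Jensen) identity for $\Pi$ to neutralise the $\alpha\Pi$-term, arriving at the same good-$\lambda$ estimate \eqref{eq:Widman}. The only difference is cosmetic and lies in the concluding step, which the paper runs through the hole-filling Lemma \ref{lemma:Widman} combined with the weak-type maximal bound \eqref{ZL} rather than your ``layer-cake plus $L^p$-boundedness of $\M$'' phrasing, but the substance is identical.
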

We mention that the condition $\int_{\T_n} u = u_0$ is a natural replacement of boundary conditions on the torus $\T_n$ and, moreover, $\A$-quasiconvexity together with coercivity condition \eqref{intro:growth:2} again guarantees existence of minimisers.

Before continuing with the main part of the article, let us mention that the results obtained here are fully qualitative: We loose constants at many steps of the estimates and therefore, the value of $\varepsilon_0$ obtained in Theorems \ref{thm:1} and \ref{thm:3} is certainly non-optimal.

\subsection*{Structure}
In Section \ref{sec:2} we introduce notation and gather some auxiliary results. Section \ref{sec:3} is concerned with showing how higher integrability is obtained by a 'reverse' $L^p$-$L^1$ estimate of the function on superlevel sets. In the remainder of the article, i.e. Section \ref{sec:4}, we therefore only need to show this reverse estimate; this is achieved with aforementioned truncation property \eqref{trunc}.
\subsection*{Acknowledgements}
The author would like to thank Franz Gmeineder for fruitful discussions and comments.
\section{Preliminaries} \label{sec:2}
In this section we gather set up some notation and gather some preliminary results. We start with some properties of the differential operator $\A$. We assume that $\A$ is a linear, constant coefficient differential operator
\[
\A u = \sum_{\vert \alpha \vert =k} A_{\alpha} \partial^{\alpha} u
\]
where $A_{\alpha} \in \Lin(\R^m;\R^l)$ are linear maps. The adjoint of $\A$ is defined via
 \[
 \A^{\ast} \phi = \sum_{\vert \alpha \vert =k} A_{\alpha}^T \partial^{\alpha} \phi
 \]
 and we call an $L^p$-function $\A$-free if for any $\phi \in C_c^{\infty}(\Omega;\R^m)$ (or $\phi \in C^{\infty}(\T_n;\R^m)$ we have $\langle u , \A^{\ast} \phi \rangle=0$.
We define
\[
\A[\xi] = \sum_{\vert \alpha \vert =k} A_{\alpha} \partial^{\alpha} u
\]
to be the Fourier symbol of $\A$. Following \cite{FM,GR} we now can define the notion of $\A$-quasiconvexity.
\begin{definition}
    We call a measurable and locally bounded function $f \colon \R^m \to \R$ $\A$-quasiconvex if for all $w \in \R^m$ and all
    $$\psi \in \test = \{\phi \in C^{\infty}(\T_n;\R^m) \cap \ker \A \colon \int_{\T_n} \psi=0\}$$
    we have 
    \[
    f(w) \leq \int_{\T_n} f(w +\psi(x)) \dx.
    \]
    We call $f$ $\A$-quasiaffine, if both $f$ and $-f$ are $\A$-quasiconvex.
    \end{definition}
While we do not need the notion of $\A$-quasiconvexity in this note, let us mention that under the growth condition \eqref{intro:growth}, $\A$-quasiconvexity of $f(x,\cdot)$ for a.e. $x$ is a necessary and sufficient condition for lower semicontinuity and hence, by the direct method, a sufficient condition for existence of minimisers. Let us further mention that $\A$-quasiconvexity together with the upper growth condition of \eqref{intro:growth} yields (cf. \cite{GR,MP})
\[
\vert f(x,w) - f(x',w) \vert \leq C(1 + \vert w \vert^{p-1} + \vert w' \vert^{p-1}) \vert w - w' \vert
\]
for $w -w' \in \spann_{\xi \in \R^n \setminus \{0\}} \ker \A[\xi]$ (this span is, in most examples, the full space $\R^m$).

The condition of $\A$-quasiaffinity is much sharper (we refer to \cite{BCO,GR} for more details). In particular, we only need that an $\A$-quasiaffine function $\Pi$ satisfies
\[
\Pi( w) = \int_{\T_n} \Pi(w+\psi(x)) \dx
\]
for all $\psi \in \test$ and that $\Pi$ is a polynomial (as long as $\spann_{\xi \in \R^n \setminus \{0\}} \A[\xi] = \R^m$). In particular, any homogeneous component of $\Pi$ is itself $\A$-quasiaffine, and thus we assume that
\[
\Pi \text{ is an $\A$-quasiaffine polynomial of degree $p \in \N$}
\]
and (by scaling) we might assume that $\vert \Pi(w) \vert \leq \vert w \vert^p$.

Finally, we also recall some elementary, yet important facts about the Hardy-Littlewood maximal function. It is defined pointwisely by
\[
\M u (x) = \sup_{r>0} \tfrac{1}{L^n(B_r(x))} \int_{B_r(x)} \vert u \vert \dx.
\]
Here, for the purpose of this definition, a function $u \in L^1(\T_n;\R^m)$ shall be understood as a $\Z^n$-periodic function in $L^1_{\mathrm{loc}}(\R^n;\R^m)$.

The maximal function is a sublinear operator that is bounded from $L^p$ to $L^p$ for any $p \in (1,\infty]$ and bounded from $L^1$ to $L^{1,\infty}$. Moreover, obviously $\vert u \vert \leq \M u $ almost everywhere. The property we use the most in this paper is, however, the following lemma (cf. \cite{Zhang}) that is a direct consequence of the boundedness of the maximal operator from $L^1$ to $L^{1,\infty}$.
\begin{lemma} \label{LZL}
Let $u \in L^1(\T_n;\R^m)$. There exists a dimensional constant $c(n)>0$, such that for any $\lambda>0$ we have
\begin{equation} \label{ZL}
\mathcal{L}^n(\{\M u \geq \lambda\}) \leq c(n) \lambda^{-1} \int_{\{\vert u \vert \geq \lambda/2\}} \vert u \vert \dx.
\end{equation}
\end{lemma}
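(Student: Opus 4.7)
The plan is to reduce the claim to the standard weak $(1,1)$ bound for the Hardy--Littlewood maximal function by a truncation of $u$ at the level $\lambda/2$. The weak $(1,1)$ inequality
\[
\mathcal{L}^n(\{\M v \geq \mu\}) \leq c(n)\,\mu^{-1}\|v\|_{L^1}
\]
is classical on $\R^n$ via a Vitali-type covering argument, and it transfers to $\T_n$ by interpreting periodic functions as $\Z^n$-periodic elements of $L^1_{\loc}(\R^n;\R^m)$, exactly as is done in the definition of $\M u$ in the paragraph preceding the lemma.

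First I would split $u = u_1 + u_2$ where
\[
u_1 := u\,\indicator_{\{|u|\geq \lambda/2\}}, \qquad u_2 := u\,\indicator_{\{|u|<\lambda/2\}}.
\]
The point of this decomposition is that $u_2$ is harmless for the maximal function at level $\lambda$: since $|u_2|\leq \lambda/2$ pointwise, one has $\M u_2 \leq \lambda/2$ everywhere. Sublinearity of $\M$ then yields $\M u \leq \M u_1 + \M u_2 \leq \M u_1 + \lambda/2$, and hence the inclusion
\[
\{\M u \geq \lambda\} \;\subset\; \{\M u_1 \geq \lambda/2\}.
\]

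Next I would apply the weak $(1,1)$ bound to $u_1$ at level $\lambda/2$, which gives
\[
\mathcal{L}^n(\{\M u_1 \geq \lambda/2\}) \leq c(n)\,(\lambda/2)^{-1}\,\|u_1\|_{L^1} = 2c(n)\,\lambda^{-1}\!\int_{\{|u|\geq \lambda/2\}} |u|\dx,
\]
and combining with the inclusion above produces the claimed estimate (after absorbing the factor $2$ into $c(n)$).

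I do not expect a real obstacle here: the only nontrivial ingredient is the weak $(1,1)$ bound for $\M$, which is standard; the whole content of the lemma is the good-lambda style truncation $u = u_1 + u_2$, together with the trivial observation $\M u_2 \leq \|u_2\|_{L^\infty} \leq \lambda/2$. The remaining care is purely bookkeeping of the dimensional constant.
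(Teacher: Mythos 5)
Your argument is correct and is exactly the route the paper intends: the paper does not spell out a proof but states the lemma as a direct consequence of the $L^1\to L^{1,\infty}$ bound for $\M$ (citing \cite{Zhang}), and your truncation $u=u\,\indicator_{\{|u|\ge\lambda/2\}}+u\,\indicator_{\{|u|<\lambda/2\}}$ together with $\M u_2\le\lambda/2$ and the weak $(1,1)$ estimate applied to $u_1$ is the standard way to make that deduction precise. The only point to keep in mind is the (routine) transfer of the weak $(1,1)$ bound to the periodic setting, which you already note.
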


\section{A hole filling lemma} \label{sec:3}
We start by showing how an estimate that arises out of the truncation technique (cf. the next Section \ref{sec:4}) yields higher integrability. The proof is quite reminiscent of Widman's hole filling technique \cite[Section 1.2.3]{BF}, \cite{Widman}, which has been widely used in regularity theory of PDEs.

\begin{lemma} \label{lemma:Widman}
Suppose that $u \in L^p(\T_n;\R^m)$ and that there exists $\lambda_0>0$, $C,R>0$ such that for all $\lambda>\lambda_0$
\begin{equation} \label{eq:Widman}
    \int_{\{ \vert u \vert \geq R \lambda\}} \vert u \vert^p \dx \leq C \lambda^{p-1} \int_{\{\vert u \vert  \geq \lambda\}} \vert u \vert \dx.
\end{equation}
Then there is $\varepsilon_0 = \varepsilon_0(R,C,p)$ such that $u \in L^{p+\varepsilon}$ for all $\varepsilon<\varepsilon_0$.
\end{lemma}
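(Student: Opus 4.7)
The plan is layer-cake plus absorption. With $\phi(\lambda) := \int_{\{|u|>\lambda\}} |u|^p \dx$ and $\psi(\lambda) := \int_{\{|u|>\lambda\}} |u| \dx$, the hypothesis \eqref{eq:Widman} reads $\phi(R\lambda) \leq C\lambda^{p-1} \psi(\lambda)$ for $\lambda > \lambda_0$; since $\phi$ is non-increasing, one may assume $R \geq 1$ without loss (if $R<1$, the hypothesis already implies the version with $R=1$ and the same constant). Starting from the Fubini identity $\int|u|^{p+\varepsilon}\dx = \varepsilon \int_0^\infty \lambda^{\varepsilon-1} \phi(\lambda)\, d\lambda$, I would multiply the hypothesis by $\lambda^{\varepsilon-1}$, integrate on $[R\lambda_0,\infty)$, substitute $\lambda = R\mu$, and apply Fubini a second time to the resulting weighted integral of $\psi$. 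Formally this produces an inequality of the shape $\int|u|^{p+\varepsilon}\dx \leq (R\lambda_0)^\varepsilon \|u\|_p^p + \theta(\varepsilon) \int|u|^{p+\varepsilon}\dx$ with
\[
\theta(\varepsilon) := \frac{CR^\varepsilon \varepsilon}{p+\varepsilon-1} \longrightarrow 0 \quad\text{as }\varepsilon \to 0^+,
\]
which can be absorbed for small $\varepsilon$.

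Of course the absorption requires $\int|u|^{p+\varepsilon}\dx < \infty$ a priori, which is precisely the conclusion. To evade the circularity I would repeat the calculation with the truncated quantity
\[
\Theta_M := \int_{\T_n} |u|^p \min(|u|,M)^\varepsilon \dx = \varepsilon \int_0^M \lambda^{\varepsilon-1} \phi(\lambda) \,d\lambda,
\]
which is finite, indeed $\Theta_M \leq M^\varepsilon \|u\|_p^p$. Splitting at $R\lambda_0$ bounds the short-range part by $(R\lambda_0)^\varepsilon \|u\|_p^p$, and the same change of variable plus hypothesis plus Fubini on $[R\lambda_0,M]$ lead to
\[
\Theta_M \leq (R\lambda_0)^\varepsilon \|u\|_p^p + \frac{CR^\varepsilon \varepsilon}{p+\varepsilon-1} \int_{\T_n} |u|\, \min(|u|,M/R)^{p+\varepsilon-1} \dx.
\]
Closing the loop requires the pointwise comparison
\[
|u|\, \min(|u|,M/R)^{p+\varepsilon-1} \leq |u|^p\, \min(|u|,M)^\varepsilon,
\]
to be verified by splitting into the three regimes $|u|\leq M/R$, $M/R<|u|\leq M$, and $|u|>M$.

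Once this pointwise bound is in hand, one gets $\Theta_M \leq (R\lambda_0)^\varepsilon \|u\|_p^p + \theta(\varepsilon) \Theta_M$; choosing $\varepsilon_0 = \varepsilon_0(R,C,p)$ so that $\theta(\varepsilon) < 1$ for $\varepsilon < \varepsilon_0$ gives $\Theta_M \leq (R\lambda_0)^\varepsilon\|u\|_p^p / (1-\theta(\varepsilon))$, a bound independent of $M$, and monotone convergence $\Theta_M \uparrow \int|u|^{p+\varepsilon}\dx$ concludes. I expect the main obstacle to be the pointwise inequality above: the change of variables naturally produces truncations at two different levels $M$ and $M/R$, and only the extra factor $|u|^{p-1} > M^{p-1}$ on the $|u|^p$-side — together with $R\geq 1$ and $p+\varepsilon>1$ — makes the comparison go through in the regime $|u|>M$ where the two min-operators behave asymmetrically.
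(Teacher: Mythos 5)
Your argument is correct, but it takes a genuinely different route from the paper. The paper proves the lemma by a hole-filling iteration on level sets: with $S=\max\{(2C)^{1/(p-1)},R\}$ it splits $\{|u|\geq\lambda\}$ into the annulus $\{S\lambda\geq|u|\geq\lambda\}$ and the tail $\{|u|\geq S\lambda\}$, absorbs half of the tail, adds $2C$ times the left-hand side ("fills the hole") to get the decay $\int_{\{|u|\geq S\lambda\}}|u|^p\dx\leq\tfrac{2C}{2C+1}\int_{\{|u|\geq\lambda\}}|u|^p\dx$, and then iterates along the scales $S^r\lambda_0$ and sums a geometric series against the weight $S^{rp\varepsilon}$. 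You instead work at the level of distribution-function integrals: the layer-cake identity, the substitution $\lambda=R\mu$, the hypothesis, and a second Fubini give the one-shot inequality $\Theta\leq(R\lambda_0)^\varepsilon\|u\|_p^p+\tfrac{CR^\varepsilon\varepsilon}{p+\varepsilon-1}\Theta$, and you correctly handle the a priori finiteness needed for absorption by running the computation for the truncated quantity $\Theta_M$; your pointwise comparison $|u|\min(|u|,M/R)^{p+\varepsilon-1}\leq|u|^p\min(|u|,M)^\varepsilon$ does hold in all three regimes (using $R\geq1$, which your reduction justifies, and $p\geq1$), and monotone convergence closes the argument. The paper's iteration never meets the finiteness issue, since it only manipulates integrals over bounded annuli and sums at the very end, whereas your scheme must introduce the truncation at level $M$; in exchange, your route yields a cleaner, explicit admissible range, e.g.\ any $\varepsilon<\varepsilon_0$ with $CR^{\varepsilon_0}\varepsilon_0<p+\varepsilon_0-1$, while the paper's threshold comes out of the interplay between the decay factor $\tfrac{2C}{2C+1}$ and the weight $S^{p\varepsilon}$. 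Note that both arguments degenerate as $p\to1$ (your $\theta(\varepsilon)$ through the factor $1/(p+\varepsilon-1)$, the paper's through $S=(2C)^{1/(p-1)}$), consistent with the remark in the paper that no gain is possible at $p=1$.
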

The proof is quite short, we shortly stress the key idea here. Suppose for simplicity that $u = \chi_E \cdot S \lambda$ for some measurable set $E$ and $S>1$. Then the inequality \eqref{eq:Widman} scales like $S^p$ and like $S^1$ on the left- and right-hand-side, respectively. Therefore, $S$ cannot be too large. Translating this heuristic argument into an inequality, we get 

 \begin{align*}
     \int_{\{\vert u \vert \geq S \lambda\}} \vert u \vert^p \dx \leq \mu \int_{\vert u \vert \geq \lambda \}} \vert u \vert^p \dx.
 \end{align*}
for some $\mu<1$, i.e. we get additional decay. This additional decay is then used to obtain higher integrability.

\begin{proof}
 Define $S= \max\{ (2C)^{1/(p-1)},R\}$. Then we may estimate for any $\lambda> \lambda_0$
 \begin{align*}
     C \lambda^{p-1} \int_{\{ \vert u \vert \geq \lambda \}} \vert u \vert 
     &
     = C \lambda^{p-1} \int_{\{ S \lambda \geq \vert u \vert \geq \lambda \}} \vert u \vert \dx +  C \lambda^{p-1}  \int_{\{\vert u \vert \geq S \lambda  \}} \vert u \vert \dx \\
     &\leq C \int_{\{ S \lambda \geq \vert u \vert \geq \lambda \}} \vert u \vert^p \dx + \tfrac{1}{2} \int_{\{\vert u \vert \geq S \lambda\}} \vert u \vert^ p \dx
 \end{align*}
 and therefore
 \begin{align*}
     \int_{\{\vert u \vert \geq S \lambda\}} \vert u \vert^p \dx \leq 2C \int_{\{S\lambda \geq \vert u \vert \geq \lambda \}} \vert u \vert^p \dx.
 \end{align*}
 Adding $2C$ times the left-hand side to the equation implies
 \begin{equation} \label{eq:Widman:2}
 \int_{\{\vert u \vert \geq S \lambda\}} \vert u \vert^p \dx \leq \tfrac{2C}{2C+1} \int_{\{ \vert u \vert \geq \lambda\}} \vert u \vert^p dx.
 \end{equation}
 Applying this inequality successively yields for each $r \in \N$
 \[
 \int_{\{S^{r+1} \lambda_0 \geq \vert u \vert \geq S^r \lambda_0\}} \vert u \vert^p \dx \leq \left(\tfrac{2C}{2C+1}\right)^r \Vert u \Vert_{L^p}^p
 \]
 and, therefore, for $\varepsilon< \tfrac{\ln((2C+1)/(2C)}{p\ln(S)}$ we have
 \[
 \int_{\{ S^{r+1} \lambda_0 \geq \vert u \vert \geq S^r \lambda_0\}} \vert u \vert^{p+\varepsilon} \dx \leq S^{(r+1)p \varepsilon} \int_{\{S^{r+1} \lambda_0 \geq \vert u \vert \geq S^r \lambda_0\}} \vert u \vert^p \dx  \leq S^{p \varepsilon} \theta^r \Vert u \Vert_{L^p}^p
 \]
for $\theta = \tfrac{2C}{C+1} S^{\varepsilon}<1$. Therefore, summing over all $r \in \N$ and using that $u$ is an $L^p$ function, we obtain
\[
\int_{\{\vert u \vert \geq \lambda_0\}} \vert u \vert^{p + \varepsilon} \dx < \infty
\]
and, thus, $u \in L^{p+\varepsilon}(\T_n;\R^m)$.
\end{proof}
\section{Proofs via truncation} \label{sec:4}
The goal of this section is to obtain the proofs of Theorems \ref{thm:1} and \ref{thm:3}. In view of Lemma \ref{lemma:Widman}, it suffices to arrive estimate at \eqref{eq:Widman}.
\subsection{Proof of Theorem \ref{thm:1}}
As said, the remaining part in order to prove Theorem \ref{thm:1} is the following statement.
\begin{lemma} \label{lemma:kasparov}
    Suppose that $f$ obeys \eqref{intro:growth} and $\A$ satisfies the truncation property \eqref{trunc} and that $u$ is a minimiser to $I$ in $L^p(\T_n;\R^m)$. Then the estimate \eqref{eq:Widman} holds for $R=2$, $C=C(\nu,C(\A))$ and $\lambda>\lambda_0(\nu,C(\A),c)>0$.
\end{lemma}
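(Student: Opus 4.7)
The plan is to compare the minimiser $u$ directly with its $\A$-free $L^\infty$-truncation $\tilde u$ furnished by \eqref{trunc}, and feed the minimality inequality $I(u)\leq I(\tilde u)$ into the coercivity/growth bounds on $f$. Concretely, I would apply \eqref{trunc} at level $2\lambda$, yielding $\tilde u\in L^\infty$ with $\A\tilde u=0$, $\|\tilde u\|_{L^\infty}\leq 2C(\A)\lambda$ and $\{u\neq\tilde u\}\subset\{\M u\geq 2\lambda\}$. Since $\tilde u$ is admissible for $I$, minimality together with the cancellation of the two integrals on the coincidence set $\{u=\tilde u\}$ reduces $I(u)\leq I(\tilde u)$ to the localised inequality
\[
\int_{\{u\neq\tilde u\}} f(x,u)\dx \leq \int_{\{u\neq\tilde u\}} f(x,\tilde u)\dx.
\]

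Next, I would apply the lower bound $|w|^p\leq f(x,w)$ to the left-hand side and the upper bound $f(x,w)\leq \nu|w|^p+c$ to the right-hand side, using $|\tilde u|\leq 2C(\A)\lambda$ pointwise, to obtain
\[
\int_{\{u\neq\tilde u\}} |u|^p\dx \leq \bigl(\nu(2C(\A))^p\lambda^p+c\bigr)\,\mathcal{L}^n(\{u\neq\tilde u\}).
\]
By choosing $\lambda_0=\lambda_0(\nu,C(\A),c)$ so that $c\leq \nu(2C(\A))^p\lambda_0^p$, the additive constant $c$ is absorbed into the leading $\lambda^p$-term for all $\lambda>\lambda_0$, at the cost of a factor of $2$ on the constant.

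The measure of the deviation set is then estimated via the set inclusion from \eqref{trunc} and Lemma \ref{LZL} applied at level $2\lambda$, giving
\[
\mathcal{L}^n(\{u\neq\tilde u\})\leq \mathcal{L}^n(\{\M u\geq 2\lambda\})\leq c(n)(2\lambda)^{-1}\int_{\{|u|\geq \lambda\}}|u|\dx.
\]
Combining these bounds yields $\int_{\{u\neq\tilde u\}}|u|^p\dx\leq C\lambda^{p-1}\int_{\{|u|\geq\lambda\}}|u|\dx$ with $C=C(\nu,C(\A))$. Since $|\tilde u|\leq 2C(\A)\lambda$ forces $\{|u|> 2C(\A)\lambda\}\subset\{u\neq\tilde u\}$, the left-hand side dominates $\int_{\{|u|\geq R\lambda\}}|u|^p\dx$ with $R$ depending only on $C(\A)$; this is exactly the form of \eqref{eq:Widman} required by Lemma \ref{lemma:Widman}, whose conclusion is insensitive to the precise value of $R>1$.

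The main obstacle is really bookkeeping: one must keep $\lambda$ above the threshold $\lambda_0$ so that the additive $c$ in \eqref{intro:growth} is dominated by the $\lambda^p$-term, and one must route the truncation level through Lemma \ref{LZL} (with its built-in factor $1/2$) so that the right-hand side is integrated over $\{|u|\geq\lambda\}$ rather than a larger set. Beyond that, the argument is a direct chain of minimality, growth bounds, the $L^\infty$ estimate from \eqref{trunc}, and the weak-type maximal inequality, with no variational, continuity, or convexity properties of $f$ entering the estimate.
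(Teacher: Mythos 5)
Your argument is in substance the paper's own: compare $u$ with its truncation, use minimality, the two-sided growth bound \eqref{intro:growth}, the $L^\infty$-bound from \eqref{trunc}, absorb $c$ for $\lambda>\lambda_0(\nu,C(\A),c)$, and finish with the weak-type estimate of Lemma \ref{LZL}. The one point where you deviate is that you restrict the comparison to the deviation set $\{u\neq\tilde u\}$; as a consequence your left-hand side only dominates $\int_{\{|u|\geq R\lambda\}}|u|^p\dx$ for $R$ of order $C(\A)$, whereas the lemma asserts \eqref{eq:Widman} with $R=2$. You correctly observe that Lemma \ref{lemma:Widman} accepts any fixed $R$, so Theorem \ref{thm:1} is unaffected (its $\varepsilon_0$ already depends on $C(\A)$), but strictly speaking you prove a variant of the stated lemma rather than the lemma itself. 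The paper avoids this by \emph{not} localising to $\{u\neq\tilde u\}$: it writes $I(\tilde u)-I(u)=\int_{\{\M u\geq\lambda\}}\bigl(f(x,\tilde u)-f(x,u)\bigr)\dx$ and applies the pointwise bound $f(x,\tilde u)-f(x,u)\leq -|u|^p+\nu C(\A)^p\lambda^p+c$ on all of $\{\M u\geq\lambda\}$ (this is harmless where $u=\tilde u$, since there $|u|\leq C(\A)\lambda$ and $\nu\geq1$, so the right-hand side is nonnegative). This yields $\int_{\{\M u\geq\lambda\}}|u|^p\dx\leq(\nu C(\A)^p\lambda^p+c)\,\mathcal{L}^n(\{\M u\geq\lambda\})$, and then $|u|\leq\M u$ a.e.\ gives the left-hand side over $\{|u|\geq\lambda\}$; combined with \eqref{ZL} and a relabelling $\lambda\mapsto 2\lambda$ this produces \eqref{eq:Widman} with $R=2$, independently of $C(\A)$. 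Incorporating this one-line modification into your argument closes the discrepancy; everything else in your proposal is correct.
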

\begin{proof}
Let $u$ be a minimiser and let $\tilde{u}$ be the truncation according to \eqref{trunc} at level $\lambda>0$. On the one hand,
\begin{equation} \label{kasparov:1}
I(\tilde{u}) -I(u) \geq 0   
\end{equation}
as $u$ is a minimiser and, on the other hand, we have
\begin{equation} \label{kasparov:2}
\begin{split}
I(\tilde{u}) - I(u) &= \int_{\{ \M u \geq \lambda \}} f(x,\tilde{u}) - f(x,u) \dx  \\ & \leq - \int_{\{\M u \geq \lambda\}} \vert u \vert^p \dx + \mathcal{L}^n(\{\M u \geq \lambda \}) \left(C(\A)^p \lambda^p \nu + c \right).
\end{split} 
\end{equation}
Combining \eqref{kasparov:1} and \eqref{kasparov:2} yields
\begin{equation} \label{kasparov:3}
     \int_{\{\M u \geq \lambda\}} \vert u \vert^p \dx \leq (\lambda^p \nu C(\A)^p +c) \mathcal{L}^n(\{ \M u \geq \lambda\})
\end{equation}
Then using that $\vert u \vert \leq \M u$, for $\lambda > \lambda_0:= \frac{c^{1/p}}{C(\A)\nu^{1/p}}$ we have
\[
 \int_{\{ \vert u \vert \geq \lambda\}} \vert u \vert^p \leq 2 \lambda^p \nu C(\A)^p \mathcal{L}^n(\{ \M u \geq \lambda\}).
\]
Finally, using estimate \eqref{ZL} from Lemma \ref{LZL} yields
\begin{equation} \label{kasparov:finale}
\int_{\{\vert u \vert \geq \lambda \}} \vert u \vert^p \dx \geq \lambda^{p-1} 2 \nu C(\A)^p C(n) \int_{\{\vert u \vert \geq \lambda/2\}} \vert u \vert \dx.
\end{equation}
\end{proof} 
Together with Lemma \ref{lemma:Widman} this concludes the proof of Theorem \ref{thm:1}. Before continuing with Theorem \ref{thm:3}, we shortly pause and point out some observations.
\begin{remark}
    \begin{enumerate}[label=(\roman*)]
        \item Due to the choice of $S$ at the start of Lemma \ref{lemma:Widman}, the gain $\varepsilon$ in integrability tends to zero, when $p$ approaches $1$. As for $p=1$ improvement in integrability cannot be shown, the method of proof is qualitatively optimal (also cf. \cite{GK1}).
        \item The method of truncation can be also tackled to consider integrands that have the form
        \[
            \int_{\T_n} f(x,v(x),\mathbb{A}v(x)) \dx,
        \]
        whenever $\mathbb{A}$ is a \emph{potential} of $\A$, i.e. (for zero average functions) $\A u =0 \Leftrightarrow u = \mathbb{A} v$. In particular, consider $\mathbb{A}= \nabla$. Using quantitative versions of  Lipschitz truncation, then one also might obtain better estimates for the gain of integrability.
        \item The method of proof also works with minor modifications if we add an additional linear error
        \[
        I(u) = \begin{cases}
        \int_{\T_n} f(x,u(x)) - g \cdot u \dx & \quad \text{if } \A u =0 \\
        \infty & \text{else.}
        \end{cases}
        \]
        for $g \in L^{r}(\Omega;\R^m)$ for $r>p'$; the only difference is the slight adjustment of \eqref{kasparov:2}.
        \item A recent point of study has been functionals, of $p$-$q$ growth, that is 
        \[
        \vert w \vert^p \leq f(x,w) \leq \nu \vert w \vert^q +c
        \]
        for $1<p<q<\infty$. Without additional assumptions (see mentioned references), due to the different homogeneity in $\lambda$, the truncation method is not able to show any improved regularity. We stress that many of the results there have been shown using stronger assumptions on the integrands, for instance that $f(x,\cdot)$ is \emph{convex}. Even in the quasiconvex setting, higher integrability is not so clear. Without hope of giving a truly complete list of references, we still refer to\cite{BS,CKP,ELM,GK2,Koch,Marcellini1,Marcellini2,Schmidt} for some results on the case of $p$-$q$ growth.
        \item As a replacement of the truncation property \eqref{trunc}, another idea (on the torus) would be to construct $\tilde{u}$ as follows: Simply cut of $u$ to obtain
        \[
        \bar{u} = \begin{cases}
            u(x) & \text{if }\vert u \vert \leq \lambda, \\
            0 & \text{else,}
        \end{cases}
        \]
        and then apply a projection operator (cf. \cite{FM,GR}) onto the kernel of $\A$ to obtain some $\tilde{u}$. The problem is, however, that this projection is non-local \emph{and} only maps $L^r$ to $L^r$ for $1<r<\infty$; In particular, $\tilde{u}$ \emph{is not} in $L^{\infty}$. Even if imposing an additional (natural) continuity assumption
        \[
        \vert f(x,w)-f(x,w') \vert \leq C(1+\vert w \vert^{p-1} + \vert w' \vert)^{p-1} \vert w- w' \vert,
        \]
        the simple truncation method does not yield any higher integrability. In particular, using the same estimates as before, we obtain only 
        \[
            \int_{\{\vert u \vert \geq \lambda\}} \vert u \vert^p \dx \leq C(s) \lambda^{p-1-\frac{p}{s'}} \left(\int_{\{ \vert u \vert \geq \lambda \}} \vert u \vert^s \right)^{1/s},
        \]
       for any $s$ that is \emph{strictly} larger than one; but this estimate alone does not yield any higher integrability.
    \end{enumerate}
        \end{remark}

We finish this subsection by further commenting on the case, where we are on a domain $\Omega \subset \R^n$. It is clear that the proof never used the fact that the domain is actually $\T_n$, it is only the truncation property \eqref{trunc} that uses the special structure of the torus. Therefore, one would need a truncation that also works on $\Omega \subset \R^n$.

If the minimisation problem further prescribes (for instance zero) boundary values, then a truncation that preserves boundary values (cf. \cite{DSSV,FJM} for more details) is needed. If, however, we only consider
\begin{equation} \label{eq:func:omega}
I(u) = \begin{cases}\int_{\Omega} f(x,u(x)) \dx& \text{if } \A u =0, \\
\infty & \text{else,}
\end{cases}
\end{equation}
then the strategy might be as follows: We extend $u$ to the full space (or to a rescaled torus) while preserving the constraint $\A u=0$. We then may use the truncation theorem to get a truncated version on $\Omega$.

By rescaling, we now suppose that $\Omega \subset \T_n$ is a subset of the torus. Again, we may define the maximal function for $ u \in L^1(\Omega)$ by extending $u$ by $0$ to $\T_n$.
\begin{coro} \label{coro:domain}
    Let $1 < p< \infty$. Suppose that $f$ obeys growth property \eqref{intro:growth} for a.e. $x \in \Omega$ and that $\A$ satisfies \eqref{trunc}. Suppose further that $\Omega$ satisfies the following extension property: \\
    There are $C_1,C_2>0$ (depending on $\Omega$) such that for any $v \in L^p(\Omega;\R^m)$ with $\A v=0$ there is a $\mathscr{E}v \in L^p(\Omega;\R^m)$ with $\A v=0$ such that either
    \begin{equation} \label{ext}
        \mathcal{L}^n(\{\M (\mathscr{E}v) \geq \lambda\}) \leq C_1(\Omega) \mathcal{L}^n(\{\M (\mathscr{E} v) \leq C_2(\Omega) \lambda\})
    \end{equation}
    or \begin{equation} \label{ext2}
        \M \E v (x) \leq C_3(\Omega) \M v(x) \quad \text{for a.e. } x \in \Omega.
    \end{equation}
    Suppose that $u \in L^p(\Omega;\R^m)$ is a minimiser to $I$ from \eqref{eq:func:omega}. Then there is a $\varepsilon_0= \varepsilon(p,C(\A),\nu,\Omega)>0$ such that for all $0<\varepsilon< \varepsilon_0$ we have $u \in L^{p+\varepsilon}(\Omega;\R^m)$.
\end{coro}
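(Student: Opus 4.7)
The plan is to reduce the problem on $\Omega$ to the toric situation by inserting $\mathscr{E}$ as a bridge: extend, truncate on $\T_n$, restrict. Given a minimiser $u\in L^p(\Omega;\R^m)$ of \eqref{eq:func:omega}, take $\mathscr{E}u\in L^p(\T_n;\R^m)$ with $\A(\mathscr{E}u)=0$ from the hypothesis; by the natural meaning of ``extension'' we have $\mathscr{E}u=u$ on $\Omega$. Apply \eqref{trunc} at level $\lambda>0$ to get $\widetilde{\mathscr{E}u}\in L^\infty(\T_n;\R^m)$ with $\|\widetilde{\mathscr{E}u}\|_{L^\infty}\leq C(\A)\lambda$, $\A\widetilde{\mathscr{E}u}=0$ and $\{\widetilde{\mathscr{E}u}\neq \mathscr{E}u\}\subset\{\M \mathscr{E}u\geq \lambda\}$. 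Since any $C_c^\infty(\Omega;\R^m)$ test function extends by zero to a smooth function on $\T_n$, $\A$-freeness on the torus implies $\A$-freeness on $\Omega$, so $\tilde u:=\widetilde{\mathscr{E}u}|_\Omega$ is a valid competitor in \eqref{eq:func:omega} that agrees with $u$ outside $\Omega\cap\{\M \mathscr{E}u\geq\lambda\}$.

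The next step is to repeat the argument of Lemma \ref{lemma:kasparov} verbatim, with $\{\M u\geq\lambda\}$ replaced by $\Omega\cap\{\M \mathscr{E}u\geq\lambda\}$. Minimality gives $I(\tilde u)-I(u)\geq 0$, while \eqref{intro:growth} together with the $L^\infty$-bound on $\tilde u$ yields
\[
I(\tilde u)-I(u)\leq \bigl(\nu C(\A)^p\lambda^p+c\bigr)\mathcal{L}^n\bigl(\Omega\cap\{\M \mathscr{E}u\geq\lambda\}\bigr)-\int_{\Omega\cap\{\M \mathscr{E}u\geq\lambda\}}\vert u\vert^p\dx.
\]
For $\lambda$ above a threshold depending only on $c,\nu,C(\A)$ and using $\vert u\vert\leq \M \mathscr{E}u$ on $\Omega$, this produces
\[
\int_{\Omega\cap\{\vert u\vert\geq\lambda\}}\vert u\vert^p\dx\leq 2\nu C(\A)^p\lambda^p\,\mathcal{L}^n\bigl(\Omega\cap\{\M \mathscr{E}u\geq\lambda\}\bigr),
\]
which is the analogue of \eqref{kasparov:3} on $\Omega$.

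The final step is to convert the measure on the right into an integral of $\vert u\vert$ over a superlevel set of $u$ on $\Omega$, so as to match the hypothesis of Lemma \ref{lemma:Widman}. Under \eqref{ext2} this is immediate: $\{\M \mathscr{E}u\geq\lambda\}\cap\Omega\subset\{\M u\geq \lambda/C_3\}$ (where $u$ is extended by zero to $\T_n$ for the purpose of the maximal function), and Lemma \ref{LZL} bounds the latter measure by $C(n)C_3\lambda^{-1}\int_{\{\vert u\vert\geq \lambda/(2C_3)\}\cap\Omega}\vert u\vert\dx$, giving precisely an inequality of the Widman form \eqref{eq:Widman}. Under \eqref{ext} one uses the bound $\mathcal{L}^n(\{\M \mathscr{E}u\geq\lambda\})\leq C_1\,\mathcal{L}^n(\{\M \mathscr{E}u\leq C_2\lambda\})$ to dyadically pass to a lower level set, to which \eqref{ZL} applies on $\T_n$; absorbing the resulting constants into $R$ and $C$ still yields \eqref{eq:Widman}. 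In either case Lemma \ref{lemma:Widman} applied on $\Omega$ concludes.

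The main obstacle is the last step under assumption \eqref{ext}, which controls $\M \mathscr{E}u$ only by its own distribution rather than by $u$ on $\Omega$; one needs a careful level-set decomposition to chain \eqref{ext} with Lemma \ref{LZL} in a way that produces the intrinsic superlevel integral of $\vert u\vert$ over $\Omega$ required by the hole-filling scheme. The remaining parts of the argument are a bookkeeping exercise in tracking how the constants $C_1,C_2,C_3$ enter the final $\varepsilon_0$, and this is what produces the dependence of $\varepsilon_0$ on $\Omega$ in the statement.
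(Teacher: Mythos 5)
Your overall route --- extend with $\mathscr{E}$, truncate on $\T_n$ via \eqref{trunc}, restrict to $\Omega$, compare with the minimiser exactly as in Lemma \ref{lemma:kasparov}, then use the extension hypothesis to turn $\mathcal{L}^n(\{\M(\E u)\geq\lambda\})$ into a superlevel integral of $\vert u\vert$ so that Lemma \ref{lemma:Widman} applies --- is precisely the paper's proof. Your treatment of the branch \eqref{ext2} is complete, including the correct observation (implicit in the statement) that $\E u = u$ a.e.\ on $\Omega$ is needed both so that the restricted truncation agrees with $u$ off $\Omega\cap\{\M(\E u)\geq\lambda\}$ and so that $\vert u\vert\leq \M(\E u)$ on $\Omega$.

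The branch \eqref{ext}, however, is left as a genuine gap, and the ``careful level-set decomposition'' you hope for does not exist if \eqref{ext} is read literally: its right-hand side is the measure of a \emph{sublevel} set of $\M(\E v)$, which for large $\lambda$ is comparable to $\mathcal{L}^n(\T_n)$, so the inequality provides no decay in $\lambda$ and can never be chained with Lemma \ref{LZL} to produce a bound of the form $\lambda^{-1}\int_{\{\vert u\vert\geq c\lambda\}}\vert u\vert\dx$. The condition as printed is a typo; the paper's own proof, whose conclusion in this case features $\int_{\{\vert u\vert\geq C_2(\Omega)\lambda/2\}}\vert u\vert \dx$, tacitly uses the corrected form $\mathcal{L}^n(\{\M(\E v)\geq\lambda\})\leq C_1(\Omega)\,\mathcal{L}^n(\{\M v\geq C_2(\Omega)\lambda\})$, where $v$ is extended by zero for the purpose of $\M v$. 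With this reading the branch is one line: Lemma \ref{LZL} at level $C_2\lambda$ applied to the zero-extension of $u$ gives $\mathcal{L}^n(\{\M(\E u)\geq\lambda\})\leq C_1\,c(n)\,(C_2\lambda)^{-1}\int_{\{\vert u\vert\geq C_2\lambda/2\}}\vert u\vert\dx$, which combined with your intermediate estimate yields \eqref{eq:Widman} with $R=2/C_2$ and a constant depending on $\nu$, $C(\A)$, $C_1$, $C_2$, $n$, $p$, exactly parallel to the \eqref{ext2} branch; no dyadic argument is needed, and none would rescue the literal \eqref{ext}.
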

\begin{proof}
    Let $u$ be a minimiser to $I$. We truncate the extended version $u$ and obtain as in the proof of Lemma \ref{lemma:kasparov} (for $\lambda>\lambda_0$)
    \[
        \int_{\{M (\mathscr{E} u) \geq \lambda \} \cap \Omega} \vert u \vert^p \leq C\lambda^p \mathcal{L}^n ( \{ \M (\E u) \geq \lambda\}).
    \]
   Emplyoing our assumption on the lemma we then also get \eqref{kasparov:finale}, i.e. if we assume that \eqref{ext} holds then
    \[
    \int_{\{\vert u \vert \geq \lambda\}} \vert u \vert^p \geq \lambda^{p-1}C C_1(\Omega)C(n) \int_{\{\vert u \vert \geq C_2(\Omega)\lambda/2\}} \dx.
    \]
    and otherwise, if \eqref{ext2} holds we have
    \[
          \int_{\{\vert u \vert \geq \lambda\}} \vert u \vert^p \geq \lambda^{p-1}C C_1(\Omega)C(n) \int_{\{\vert u \vert \geq \lambda/(2C_3(\Omega))\}} \dx.
    \]
    Due to Lemma \ref{lemma:Widman}, this is sufficient to obtain higher integrability.
\end{proof}
We finish the discussion of the strong coercivity condition with some remarks on the extension property.

First, we consider an example of an extension  that obeys the conditions of above Corollary. Let $\A = \divergence$ and $\Omega=(0,1/2)^n$ be the cube. By reflection, if $x_n > 1/2$ define
\[
\tilde{u}(x) = \left(\begin{array}{c}
    -u_1(Sx) \\
   \ldots \\ 
   -u_{n-1}(Sx) \\
    u_n(Sx)
\end{array} \right), \quad Sx= (x_1,x_2,\ldots,x_{n-1}, 1- x_n)
\]
one might extend $u$ to $(0,1/2)^{n-1} \times (0,1)$. By repeating the procedure one can extend $u$ to $(0,1)^{n}$ to some $\E u$. Identifying $(0,1)^n$ with $\T_n$, we can check that this $\E u$ is still divergence-free on $\T_n$.
On the other hand, by considering the definition of the maximal function we can see that 
\[
\M (\E u ) \leq 2^n \M u
\]
and by the second extension assumption of Corollary \ref{coro:domain} we also get higher integrability.

Similar to the topic of $\A$-free truncation, however, $\A$-free extension is not fully resolved and we refer to the articles \cite{GS,Kato} for more information. We stress however that we have the following result based on an $\A$-free extension result of \cite{GS}.
\begin{lemma}
Suppose that there is a bounded linear operator $\E \colon L^1(\Omega;\R^m) \to L^1(\T_n;\R^m)$ with the following properties:
\begin{enumerate}[label=(\roman*)]
    \item \textbf{conservation of the differential constraint:} $\A u =0$ in $\D'(\Omega;\R^l)$ implies $\A (\E u) =0$ in $\D'(\T_n;\R^l)$;
    \item \textbf{extension:} $\E u = u$ pointwise a.e. in $\Omega$;
    \item \textbf{local bounds:} There exists $c_1,c_2>0$ such that for any for all $y \in \Omega^C$, we have
    \[
    \int_{B_{\rho/2}(y)} \vert \E u \vert \dx \leq \int_{B_{c_1\rho} \cap S_{\rho}(\Omega)} \vert u \vert \dx,
    \]
    where $\rho =\dist(y,\partial \Omega)$ and 
    \[
    S_{\rho}= \{ x \in \Omega \colon c_2^{-1} \rho \leq \dist(x,\partial \Omega) \leq c_2 \rho \}.
    \]
\end{enumerate}
Then the extension satisfies property \eqref{ext2}.
\end{lemma}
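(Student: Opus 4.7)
The plan is to fix $x \in \Omega$ and an arbitrary radius $r > 0$ and show the pointwise bound
\[
\frac{1}{\mathcal{L}^n(B_r(x))} \int_{B_r(x)} |\E u| \dx \leq C_3(\Omega)\, \M u(x);
\]
taking the supremum over $r > 0$ then yields \eqref{ext2}. First I would split
\[
\int_{B_r(x)} |\E u| \dx = \int_{B_r(x) \cap \Omega} |u| \dx + \int_{B_r(x) \cap \Omega^c} |\E u| \dx,
\]
using property (ii) on the $\Omega$-piece. The first summand is immediately bounded by $\M u(x) \cdot \mathcal{L}^n(B_r(x))$, so the entire task reduces to controlling the complement part.

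For the complement part I would invoke a Whitney-type covering of $\Omega^c$: a countable family $\{y_j\} \subset \Omega^c$ with $\rho_j := \dist(y_j,\partial \Omega)$ such that the balls $B_{\rho_j/2}(y_j)$ cover $\Omega^c$ and, crucially, the dilates $B_{c_1 \rho_j}(y_j)$ have a uniformly bounded overlap $N = N(n,c_1)$. Applying property (iii) to each Whitney ball and summing over the relevant indices $J := \{j : B_{\rho_j/2}(y_j) \cap B_r(x) \neq \emptyset\}$ gives
\[
\int_{B_r(x) \cap \Omega^c} |\E u| \dx \leq \sum_{j \in J} \int_{B_{c_1 \rho_j}(y_j) \cap S_{\rho_j}(\Omega)} |u| \dx.
\]

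The next step is a geometric comparison that squeezes every term of the sum into one enlarged ball around $x$. Because $x \in \Omega$ and $y_j \in \Omega^c$, the straight segment $[x,y_j]$ must cross $\partial \Omega$, hence $\rho_j \leq |y_j - x|$; combined with $|y_j - x| \leq r + \rho_j/2$ (which follows from $j \in J$), this forces $\rho_j \leq 2r$ and $|y_j - x| \leq 2r$. Consequently $B_{c_1 \rho_j}(y_j) \cap S_{\rho_j}(\Omega) \subset B_{(2c_1 + 2)r}(x) \cap \Omega$. Using the overlap bound for the Whitney dilates, the sum collapses to at most $N \int_{B_{(2c_1+2)r}(x) \cap \Omega} |u| \dx$, which by the doubling property of Lebesgue measure is bounded by a dimensional constant times $\M u(x) \cdot \mathcal{L}^n(B_r(x))$. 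Combining the two pieces and dividing by $\mathcal{L}^n(B_r(x))$ finishes the proof, with $C_3(\Omega)$ depending only on $n$, $c_1$ and $c_2$.

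The main obstacle I expect is producing a Whitney covering with the \emph{correct} dilation constant, namely so that the dilates $B_{c_1 \rho_j}(y_j)$, and not just the original $B_{\rho_j/2}(y_j)$, enjoy a uniform overlap bound. This is classical but dilation-sensitive: the standard Whitney decomposition does control overlaps of dilates by any fixed factor, at the cost of an overlap constant growing with the factor, which is acceptable since $c_1$ is fixed a priori. A secondary mild point is that property (iii) is only formulated for $y \in \Omega^c$, so one should make sure the centers $y_j$ produced by the Whitney decomposition lie in the open complement; assuming $\Omega$ is open and $\mathcal{L}^n(\partial \Omega) = 0$, this is routine.
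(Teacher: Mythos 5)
Your overall architecture --- split $B_r(x)$ into the $\Omega$-part and the complement part, cover the complement by balls $B_{\rho_j/2}(y_j)$ with $\rho_j=\dist(y_j,\partial\Omega)$, apply the local bound (iii) ball by ball, and localise everything into $B_{(2c_1+2)r}(x)$ --- matches the paper's proof, and your geometric squeezing step ($\rho_j\le 2r$, $|y_j-x|\le 2r$) is correct. The genuine gap is exactly the point you flag as the ``main obstacle'': a covering of $\Omega^c$ by half-distance balls whose dilates $B_{c_1\rho_j}(y_j)$ have overlap bounded by $N(n,c_1)$ does \emph{not} exist in general. Bounded overlap of dilated Whitney cubes or balls is only true for dilation factors small enough that the dilates stay inside the set being decomposed; here the dilates must cross $\partial\Omega$ (they have to reach $S_{\rho_j}\subset\Omega$), and then balls of all scales pile up at points of $\Omega$ near the boundary. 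Concretely, for $\Omega=\{x_n>0\}$ and $x=(0,\dots,0,\delta)$: for every dyadic scale $t=2^k\delta$, any covering of $\Omega^c$ by balls $B_{\rho/2}(y)$, $\rho=\dist(y,\partial\Omega)$, must contain a ball of radius comparable to $t$ centred roughly at depth $t$ below $x$, and for $c_1\ge 3$ its $c_1$-dilate contains $x$; hence the overlap at $x$ is at least of order $\log(r/\delta)$, which is unbounded as $\delta\to 0$. Since $c_1$ is dictated by the hypothesis and may well exceed any small threshold, the step ``using the overlap bound for the Whitney dilates, the sum collapses to at most $N\int_{B_{(2c_1+2)r}(x)\cap\Omega}|u|\dx$'' is unjustified, and no choice of Whitney decomposition repairs it.

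The fix is already contained in your displayed sum: the sets appearing there are $B_{c_1\rho_j}(y_j)\cap S_{\rho_j}(\Omega)$, not the full dilates, and membership of a point $x\in\Omega$ in $S_{\rho_j}$ pins the scale, namely $c_2^{-1}\dist(x,\partial\Omega)\le \rho_j\le c_2\dist(x,\partial\Omega)$. Combined with the bounded overlap of the \emph{original} balls $B_{\rho_j/2}(y_j)$ (which you do get, with a purely dimensional constant, from a Besicovitch or Vitali covering of $B_r(x)\setminus\overline{\Omega}$), a volume comparison inside $B_{C(c_1,c_2)\dist(x,\partial\Omega)}(x)$ shows that at most $c(n)(c_1c_2^{2})^{n}$ indices $j$ can satisfy $x\in B_{c_1\rho_j}(y_j)\cap S_{\rho_j}$; with this pointwise multiplicity bound in place of the (false) bounded overlap of dilates, the rest of your argument goes through verbatim, with $C_3$ depending on $n$, $c_1$, $c_2$. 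This pointwise counting via $S_\rho$ is precisely how the paper argues.
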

These local bounds can be achieved in the most classic case of $\A=\curl$, i.e. with the gradient as a potential, for Lipschitz domains, cf. \cite{Jones,Stein}. Moreover, again for Lipschitz domains, such local bounds may be obtained for annihilators of $\C$-elliptic operators \cite{BC,GmR} or for $\A= \divergence$ \cite{GS}, making Corollary \ref{coro:domain} valid for those cases.
\begin{proof}
    Let $x^{\ast} \in \Omega$ and $r>0$. If $r < \dist(x^{\ast},\Omega)$ then 
    \[
    \int_{B_r(x^{\ast}) } \vert \E u \vert \dx =\int_{B_r(x^{\ast}) \cap \Omega} \vert u \vert \dx.
    \]
    Suppose now $r> \dist(x^{\ast},\Omega)$ and consider $A= B_r(x^{\ast}) \setminus \bar{\Omega}$. By Besicovitch covering theorem we can cover $A$ by balls
    \[
    B_{\dist(y,\partial \Omega)/2} (y)
    \]
    for $y \in A$ so that any ball intersects with at most $c(n)$ other balls. Let $\mathcal{B}$ be the collection of those balls $B$ that have radii $r(B)$.
    Therefore,
    \[
    \int_A \vert \E u \vert \dx \leq \sum_{ B \in \B} \int_B \vert \E u \vert \dx .
    \]
    Now applying the local bounds we obtain
    \[
    \sum_{B \in \B} \int_B \vert \E u \vert \dx = \sum_{B \in \B} \int_{c_1 B \cap S_{r(B)}} \vert u \vert \dx = \int_{\Omega} \vert u \vert \# \{B \colon x \in c_1 B \cap S_{r(B)}\} \dx.
    \]
    We now estimate the number of those balls for any $x \in \Omega$. Observe that if $x \notin B_{c_1r}(x^{\ast})$  we have $\# \{B \colon x \in c_1 B \cap S_{r(B)}\}=0$. Else, denote by $\delta= \dist(x,\partial \Omega)$. If $x \in c_1 B \cap S_{r(B)}$ we know that $c_2^{-1}\delta \leq r(B) \leq c_2{\delta}$ and that $B \subset B_{c_1c_2\delta}(x)$. As only $c(n)$ balls intersect at every point, we therefore get
    \[
        c(n) \mathcal{L}^n(B_{c_1c_2\delta})(x) \geq \sum_{B \colon x \in c_1 B \cap S_{r(B)}} \mathcal{L}^n(B)
    \]
    and, using the lower bound on the radius of the balls we conclude
    \[
    \#\{B \colon x \in c_1B \cap S_{r(B)} \} \leq c(n) (c_1c_2^2)^{n} =:C.
    \]
    and thus
    \[
    \int_{B_r(x^{\ast})} \vert \E u \vert \leq C\int_{B_{c_1r}(x^{ast}) \cap \Omega} \vert u \vert \dx.
    \]
    Taking the supremum over all $r>0$ we finally arrive at the desired estimate
    \[
    \M (\E u) (x) \leq c_1^n C \M(u)(x) \quad \text{for all } x \in \Omega.
    \]
    \end{proof} 
    For previously mentioned operators and Lipschitz domains $\Omega$, we actually have this estimate (e.g. \cite[Lemma 5.9. ff.]{GS}such that, in this case, the higher integrability result on the torus may be extended to any Lipschitz domain (when \emph{not} imposing additional boundary conditions).

\subsection{Proof of Theorem \ref{thm:3}}
Again, we only need to achieve the estimate \eqref{eq:Widman}, which is reflected in the following lemma.

\begin{lemma} \label{lemma:anand}
Suppose that $f$ obeys \eqref{intro:growth:2} and \eqref{intro:conti} and $\A$ satisfies the truncation property \eqref{trunc}. Let $u_0 \in \R$ and $u$ be a minimiser to $I$ in $L^p(\T_n;\R^m)$. Then the estimate \eqref{eq:Widman} holds for $R=2$ and $C=C(\nu+\alpha,C(\A),L)$ for $\lambda > \lambda_0(u_0,\nu+\alpha,C(\A),c,L)$.
\end{lemma}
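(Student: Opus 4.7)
The plan is to adapt Lemma \ref{lemma:kasparov}, addressing two new complications: the $\A$-quasiaffine correction $\alpha\Pi$ in the lower growth, and the mean constraint $\int_{\T_n} u \dx = u_0$. For the first, observe that the $p$-homogeneous $\A$-quasiaffine polynomial $\Pi$ satisfies $\int_{\T_n} \Pi(v) \dx = \Pi(u_0)$ for every admissible competitor $v$ (apply the defining identity of $\A$-quasiaffinity with $\psi = v - u_0 \in \test$). Consequently, $\tilde f(x,w) := f(x,w) + \alpha \Pi(w)$ obeys $|w|^p \leq \tilde f(x,w) \leq (\nu+\alpha)|w|^p + c$, and $\tilde I(v) := \int_{\T_n} \tilde f(x, v) \dx$ differs from $I'(v)$ only by the constant $\alpha\Pi(u_0)$ on admissible $v$, so $u$ minimises $\tilde I$ as well; moreover \eqref{intro:conti} survives for $\tilde f$ with a modified constant, since $\Pi$ is a polynomial.

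To cope with the mean constraint, set $w := u - u_0$, apply \eqref{trunc} to the zero-mean $\A$-free function $w$ at level $\lambda$ to obtain $\tilde w$, and define the competitor $\hat u := \tilde w - \bar w + u_0$ with $\bar w := \int_{\T_n} \tilde w \dx$. Then $\hat u$ is $\A$-free with $\int \hat u = u_0$ and $\|\hat u\|_{L^\infty} \leq C(\A)\lambda + |\bar w| + |u_0|$. Since $\tilde w = w$ outside $A := \{\M w \geq \lambda\}$ and $\int w = 0$, one has $\bar w = \int_A (\tilde w - w) \dx$; splitting $A$ according to whether $|w| \gtrless \lambda/2$ and applying Lemma \ref{LZL} gives $|\bar w| \leq C(\A) \int_{\{|w| \geq \lambda/2\}} |w| \dx$.

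Testing $\tilde I(u) \leq \tilde I(\hat u)$ and splitting $\T_n = A \cup A^c$: on $A$, the growth bounds for $\tilde f$ yield, exactly as in \eqref{kasparov:2},
$$\int_A (\tilde f(x, \hat u) - \tilde f(x, u)) \dx \leq C(\nu+\alpha) \lambda^p \mathcal{L}^n(A) + c \mathcal{L}^n(A) - \int_A |u|^p \dx,$$
using $\|\hat u\|_{L^\infty} \leq C\lambda$ for $\lambda \geq \lambda_0$. On $A^c$, where $\hat u - u \equiv -\bar w$ and $|u|, |\hat u| \leq C\lambda$, the continuity for $\tilde f$ produces $|\tilde f(x, \hat u) - \tilde f(x, u)| \leq CL(1 + \lambda^{p-1})|\bar w|$. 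Combining these with $\tilde I(\hat u) - \tilde I(u) \geq 0$, applying Lemma \ref{LZL} to convert $\lambda \mathcal{L}^n(A)$ into $\int_{\{|w|\geq \lambda/2\}} |w|\dx$, and invoking the bound on $|\bar w|$ above, every error contribution reduces to something of order $\lambda^{p-1} \int_{\{|w| \geq \lambda/2\}} |w|\dx$, which produces \eqref{eq:Widman} for $w$ and hence for $u$ after choosing $\lambda_0$ large enough to absorb the shift $u_0$.

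The main obstacle is the $\Pi$-correction: with $\alpha > 1$, $\alpha \Pi(u)$ cannot be absorbed pointwise into $|u|^p$, so one is forced to rely on the global quasiaffine identity $\int \Pi(v)\dx = \Pi(u_0)$. This in turn obliges the competitor $\hat u$ to preserve the mean $u_0$, which is precisely why the correction by $\bar w$ and the continuity-based error estimate on $A^c$ are required --- neither of which was needed in Lemma \ref{lemma:kasparov}.
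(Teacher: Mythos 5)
Your proposal is correct and rests on the same ingredients as the paper's proof: truncate (a shift of) the minimiser, build a mean-corrected competitor by adding a constant, bound that constant by $\int_{\{|u|\ge\lambda\}}|u|\dx$ via Lemma \ref{LZL}, control the resulting error with the continuity condition \eqref{intro:conti}, and use the quasiaffinity identity $\int_{\T_n}\Pi(v)\dx=\Pi\bigl(\int_{\T_n}v\dx\bigr)$ to neutralise the $\alpha\Pi$ defect. The only differences are organisational and harmless: you absorb $\alpha\Pi$ into the integrand $\tilde f=f+\alpha\Pi$ (so the comparison reduces to the coercive structure of Lemma \ref{lemma:kasparov}) where the paper instead estimates $\int_{\{\M u\ge\lambda\}}\alpha\Pi(u)\dx$ directly via the same identity, and you truncate $u-u_0$ rather than $u$ and conclude \eqref{eq:Widman} for $w=u-u_0$ first; the transfer back to $u$ (with some fixed $R$ and $\lambda_0$ depending on $u_0$) is routine, as is the extension of the quasiaffinity identity from $\test$ to $L^p$ fields, which the paper also uses implicitly.
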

\begin{proof}
    Let $u$ be a minimiser and let $\tilde{u}$ be the truncation according to \eqref{trunc} at level $\lambda>0$. Define 
    \[
    \bar{u} = \tilde{u} + (u_0 - \int_{\T_n} \tilde{u} \dx).
    \]
    Then $\bar{u}$ is an admissible competitor to $u$ and thus
    \begin{equation} \label{anand:1}
        0 \leq I(\bar{u}) -I(u) \leq (I(\bar{u}) - I(\tilde{u})) + ( I(\tilde{u}) - I(u)).
    \end{equation}
    We estimate both summands from above: We have 
    \[
    \left \vert u_0 - \int_{\T_n }\tilde{u} \dx \right \vert \leq \int_{\T_n} \vert u - \tilde{u} \vert \dx  \leq C(\A) \lambda \mathcal{L}^n(\{u \geq \lambda\}) + \int_{\{\vert u \vert \geq \lambda\}} \vert u \vert \dx
    \]
    and, therefore, due to continuity property \eqref{intro:conti}
    \begin{equation} \label{anand:2}
        I(\bar{u}) - I(\tilde{u}) \leq C(L,\A) (1+ \lambda^{p-1}) \int_{\{\vert u \vert \geq \lambda\}} \vert u \vert \dx
    \end{equation}
    For $I(\tilde{u})-I(u)$ observe that
    \begin{align*}
        I(\tilde{u})-I(u) &= \int_{\{ \M u \geq \lambda\}} f(x,\tilde{u}(x)) - f(x,u(x)) \dx \\
        & \leq (\nu \lambda^p+c) \mathcal{L}^n(\{\M u \geq \lambda\}) - \int_{\{  \M u  \geq \lambda \}} \vert u \vert^p  \dx + \int_{\{ \M u  \geq \lambda\} } \alpha \Pi(u) \dx
     \end{align*}
     The first two terms can be estimated as in the proof of Lemma \ref{lemma:kasparov}. For the last term note that
     \begin{equation*}
         \Pi(u_0) - \Pi \left(\int_{\T_n} \tilde{u} \dx \right) = \int_{\{ u \neq \tilde{u} \}} \Pi(u) - \Pi(\tilde{u}) \dx
     \end{equation*}
     and therefore
     \begin{equation} \label{anand:3}
     \begin{split}
         \int_{\{ \M u \geq \lambda\}} \alpha \Pi(u) &\leq \int_{\{ \M u \geq \lambda\}} \Pi(\tilde{u}) \dx  + \left( \Pi(u_0) - \Pi \left(\int_{\T_n} \tilde{u} \dx \right) \right)\\
         &\leq \mathcal{L}^n(\{ \M u \geq \lambda\}) C(\A)^p\lambda^p + (1+u_0^{p-1}) \int_{\{\vert u \vert \geq \lambda\}} \vert u \vert \dx 
    \end{split}
     \end{equation}
     Using \eqref{anand:1}-\eqref{anand:2} we arrive at
     \[
     \int_{\{\vert u \vert \geq \lambda \}} \vert u \vert^p \dx \leq  C(1+C(\nu+\alpha,C(\A))\lambda^p) \mathcal{L}^n(\{ \M u \geq \lambda\}.
     \]
    As in the proof of Lemma \ref{lemma:kasparov}, the estimate \eqref{ZL} for the maximal function and choosing $\lambda> \lambda_0$ large enough yields for $\lambda> \lambda_0$
    \begin{equation*}
          \int_{\{\vert u \vert \geq \lambda \}} \vert u \vert^p \dx  \leq C \lambda^{p-1} \mathcal{L}^n(\{ \M u \geq \lambda\}).
    \end{equation*}
\end{proof}
Last, we shortly comment about two cases where this type of growth condition is put into practice:
\begin{ex}
\begin{enumerate}[label=(\roman*)]
    \item A matrix $A \in \R^{n \times n}$ is $K$-quasiconformal, if there is a lower-bound on the determinant, i.e. $\det(A) \geq K \vert A \vert^n$. It has been shown that gradients of functions that are $K$-quasiconformal (meaning that their gradient is $K$-quasiconformal almost everywhere) obey higher integrability properties (with sharp upper bounds). In our case, we obtain a non-quantitative version of the higher integrability results (cf. \cite{Astala,Gehring}), as quasiconformal matrices are minimisers to the functional with integrand
    \[
    f(A) = \max\{0,\vert A \vert^n-K\det(A)\}.
    \]
    \item Higher integrability of minimisers of a $p$-Laplace (cf. \cite{CT,KMSX}) can be seen through two different viewpoints. First, a solution is a minimiser to
    \[
    I(u) = \int \vert \nabla u \vert^p \dx,
    \]
    and, again, we get a non-quantitative version through our results. On the other hand, write $\epsilon= \nabla u$. Then any solution to the $p$-Laplace equation may be seen as a minimiser to 
    \[
    \tilde{I}(\epsilon,\sigma) = \begin{cases}
        \int \tfrac{1}{p} \vert \epsilon \vert^p +\tfrac{1}{q} \vert \sigma \vert^q - \epsilon \cdot \sigma \dx & \text{if } \A (\epsilon,\sigma)=0, \\
        \infty & \text{else,} 
        \end{cases}
    \]
    where $\A(\epsilon,\sigma)$ is
    \[
    \A(\epsilon,\sigma) = \left( \begin{array}{c} \curl \epsilon \\
    \divergence \sigma
    \end{array}  \right).
    \]
    In particular, minimisers of $\tilde{I}$ are solutions to the differential inclusion
    \[
    \A(\epsilon,\sigma) =0 \quad \text{and} \quad \sigma= \vert \epsilon \vert^{p-2} \epsilon.
    \]
    With a slight modification adjusted to the different growths in $\epsilon$ and $\sigma$ one may also show higher integrability properties of solutions via the results of the second theorem.
\end{enumerate}
\end{ex}

\bibliography{biblio.bib}
\bibliographystyle{abbrv}

\end{document}